\newtheorem{theorem}{Theorem}[section]
\newtheorem{proposition}{Proposition}[section]
\newtheorem{lemma}{Lemma}[section]
\newcommand{\p}{\mathbb{P}}
\newcommand{\e}{\mathbb{E}}
\newcommand{\reals}{\mathbb{R}}
\newcommand{\ind}{\mathbf{1}}
\newcommand{\ruintime}{\sigma^\pi_p}
\newcommand{\ruinbarrier}{\sigma^b_p}
\begin{document}

\title[De Finetti's control problem with Parisian ruin]{De Finetti's control problem with Parisian ruin for spectrally negative Lévy processes}

\author[]{Jean-Fran\c{c}ois Renaud}
\address{D\'epartement de math\'ematiques, Universit\'e du Qu\'ebec \`a Montr\'eal (UQAM), 201 av.\ Pr\'esident-Kennedy, Montr\'eal (Qu\'ebec) H2X 3Y7, Canada}
\email{renaud.jf@uqam.ca}

\date{\today}


\begin{abstract}
We consider de Finetti's stochastic control problem when the (controlled) process is allowed to spend time under the critical level. More precisely, we consider a generalized version of this control problem in a spectrally negative Lévy model with exponential Parisian ruin. We show that, under mild assumptions on the Lévy measure, an optimal strategy is formed by a barrier strategy and that this optimal barrier level is always less than the optimal barrier level when classical ruin is implemented. Also, we give necessary and sufficient conditions for the barrier strategy at level zero to be optimal.
\end{abstract}

\maketitle

\section{Introduction and main result}

On a filtered probability space $\left( \Omega, \mathcal{F}, \mathbb{F}, \p \right)$, let $X=\left\lbrace X_t , t \geq 0 \right\rbrace$ be a spectrally negative Lévy process with Laplace exponent $\theta \mapsto \psi (\theta)$ and with $q$-scale functions $\left\lbrace W^{(q)}, q \geq 0 \right\rbrace$ given by
$$
\int_0^\infty \mathrm{e}^{-\theta x} W^{(q)}(x) \mathrm{d}x = \left(\psi(\theta)-q \right)^{-1} ,
$$
for all $\theta> \Phi(q)=\sup \left\lbrace \lambda \geq 0 \colon \psi(\lambda)=q \right\rbrace$. Recall that
$$
\psi(\theta) = \gamma \theta + \frac{1}{2} \sigma^2 \theta^2 + \int^{\infty}_0 \left( \mathrm{e}^{-\theta z} - 1 + \theta z \ind_{(0,1]}(z) \right) \nu(\mathrm{d}z) ,
$$
where $\gamma \in \reals$ and $\sigma \geq 0$, and where $\nu$ is a $\sigma$-finite measure on $(0,\infty)$, called the L\'{e}vy measure of $X$, satisfying
\begin{equation*}
\int^{\infty}_0 (1 \wedge x^2) \nu(\mathrm{d}x) < \infty .
\end{equation*}
For more details on spectrally negative Lévy processes and scale functions, see e.g.\ \cite{kuznetsov-e-tal_2012,kyprianou_2014}.

In what follows, we will use the following notation: the law of $X$ when starting from $X_0 = x$ is denoted by $\p_x$ and the corresponding expectation by $\e_x$. We write $\p$ and $\e$ when $x=0$.

\subsection{Problem formulation}

Let the spectrally negative Lévy process $X$ be the underlying surplus process. A dividend strategy $\pi$ is represented by a non-decreasing, left-continuous and adapted stochastic process $L^\pi = \left\lbrace L^\pi_t , t \geq 0 \right\rbrace$, where $L^\pi_t$ represents the cumulative amount of dividends paid up to time $t$ under this strategy, and such that $L^\pi_0 = 0$. For a given strategy $\pi$, the corresponding controlled surplus process $U^\pi = \left\lbrace U^\pi_t , t \geq 0 \right\rbrace$ is defined by $U^\pi_t = X_t - L^\pi_t$. The stochastic control problem considered in this paper involves the time of Parisian ruin (with rate $p>0$) for $U^\pi$ defined by
$$
\ruintime = \inf \left\lbrace t>0 \colon t-g_t^\pi > \mathbf{e}_p^{g_t^\pi} \; \text{and} \; U^\pi_t < 0 \right\rbrace ,
$$
where $g_t^\pi = \sup \left\lbrace 0 \leq s \leq t \colon U^\pi_s \geq 0 \right\rbrace$, with $\mathbf{e}_p^{g_t^\pi}$ an independent random variable, following the exponential distribution with mean $1/p$, associated to the corresponding excursion below $0$ (see \cite{baurdoux-et-al_2016} for more details). Note that, without loss of generality, we have chosen $0$ to be the critical level.

A strategy $\pi$ is said to be admissible if a dividend payment is not larger than the current surplus level, i.e.\ $L^\pi_{t+} - L^\pi_t \leq U^\pi_t$, for all $t < \ruintime$, and if no dividends are paid when the controlled surplus is negative, i.e.\ $t \mapsto L^\pi_t \, \ind_{(-\infty,0)} (U^\pi_t) \equiv 0$. The set of admissible dividend strategies will be denoted by $\Pi_p$.

Fix a discounting rate $q \geq 0$. The value function associated to an admissible dividend strategy $\pi \in \Pi_p$ is defined by
$$
v_\pi (x) = \e_x \left[ \int_0^{\ruintime} \mathrm{e}^{-q t} \mathrm{d}L^\pi_t \right] , \quad x \in \reals .
$$
In particular, for $\pi \in \Pi_p$ and $x<0$, using the strong Markov property and the spectral negativity of $X$, we can easily verify that
\begin{equation}\label{eq:value-under-zero}
v_\pi(x)=\e_x \left[\mathrm{e}^{-q \tau_0^+} \ind_{\{\tau_0^+ < \mathbf{e}_p\}} \right] v_\pi (0) = \mathrm{e}^{\Phi(p+q) x} v_\pi (0) ,
\end{equation}
where $\tau_0^+ = \inf \left\lbrace t>0 \colon X_t >0 \right\rbrace$ and where $\mathbf{e}_p$ is an independent exponentially distributed random variable with mean $1/p$, thanks to the well-known fluctuation identity (see e.g.\ \cite{kyprianou_2014})
\begin{equation}\label{eq:first-passage-above}
\e_x \left[\mathrm{e}^{-r \tau_b^+} \ind_{\{\tau_b^+ < \infty\}} \right] = \mathrm{e}^{-\Phi(r) (b-x)} , \; x \leq b ,
\end{equation}
where $\tau_b^+ = \inf \left\lbrace t>0 \colon X_t >b \right\rbrace$.

The goal is to find the optimal value function $v_\ast$ defined by
$$
v_\ast (x) = \sup_{\pi \in \Pi_p} v_\pi (x)
$$
and an optimal strategy $\pi_\ast \in \Pi_p$ such that
$$
v_{\pi_\ast} (x) = v_\ast (x) ,
$$
for all $x \in \reals$. Because of the Parisian nature of the time of ruin considered in this control problem, we have to deal with possibly negative starting capital.

\subsection{Main result and organization of the paper}

Let us introduce the family of horizontal barrier strategies, also called reflection strategies. For $b \in \reals$, the (horizontal) barrier strategy at level $b$ is the strategy denoted by $\pi^b$ and with cumulative amount of dividends paid until time $t$ given by $L_t^b = \left( \sup_{0<s\leq t} X_s - b \right)_+$, for $t>0$. If $X_0 = x > b$, then $L^b_{0+} = x-b$. Note that, if $b \geq 0$, then $\pi_b \in \Pi_p$. The corresponding value function is thus given by
$$
v_b (x) = \e_x \left[ \int_0^{\ruinbarrier} \mathrm{e}^{-q t} \mathrm{d}L_t^b \right] ,
$$
for all $x \in \reals$, where $\ruinbarrier$ is the time of Parisian ruin (with rate $p>0$) for the controlled process $U^b_t = X_t - L_t^b$.

Before stating the main result of this paper, recall that the tail of the L\'evy measure is the function $x\mapsto\nu(x,\infty)$, where $x\in(0,\infty)$, and that a function $f \colon (0,\infty) \to (0,\infty)$ is log-convex if the function $\log(f)$ is convex on $(0,\infty)$.

\begin{theorem}\label{T:main}
Fix $q \geq 0$ and $p>0$. If the tail of the L\'evy measure is log-convex, then an optimal strategy for the control problem is formed by a barrier strategy.
\end{theorem}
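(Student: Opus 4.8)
The plan is to use the classical guess-and-verify scheme, adapted to the Parisian setting.

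\emph{Step 1 (value of a barrier strategy).} I would first compute $v_b$ for each $b \geq 0$. Decomposing the path of $U^b$ at its successive crossings of level $0$, and using the strong Markov property together with the fluctuation identities for spectrally negative Lévy processes with exponential Parisian ruin (in the spirit of \cite{baurdoux-et-al_2016}), a direct computation gives an explicit formula: there is a scale-type function $h = h_{p,q}$, assembled from $W^{(q)}$, $W^{(p+q)}$ and their convolution, such that
\[
v_b(x) =
\begin{cases}
\mathrm{e}^{\Phi(p+q)x}\, h(0)/h'(b), & x < 0, \\[2pt]
h(x)/h'(b), & 0 \leq x \leq b, \\[2pt]
x - b + h(b)/h'(b), & x > b,
\end{cases}
\]
so that $v_b$ is continuous on $\reals$, smooth away from $b$, with smooth fit at $b$ of the order dictated by the path variation of $X$. \emph{Step 2 (the candidate barrier).} Set
\[
b^* = \sup\left\{ b \geq 0 \colon h'(b) \leq h'(x)\ \text{for all}\ x \geq 0 \right\},
\]
so that $b^* = 0$ precisely when $h'$ is non-decreasing on $[0,\infty)$. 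The whole force of the hypothesis is condensed into a scale-function lemma: if the tail $x \mapsto \nu(x,\infty)$ is log-convex, then $h'$ is ``valley-shaped'' on $(0,\infty)$, i.e.\ non-increasing on $(0,b^*)$ and non-decreasing on $(b^*,\infty)$. This makes $b^*$ well defined, guarantees $v_{b^*}'(x) \geq 1$ for all $x \geq 0$, and yields the required sign of the generator above the barrier.

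\emph{Step 3 (verification).} It then remains to check that $v_{b^*}$ solves the Hamilton--Jacobi--Bellman variational inequality. Writing $\mathcal{L}$ for the generator of $X$, namely
\[
\mathcal{L}f(x) = \gamma f'(x) + \tfrac12 \sigma^2 f''(x) + \int_0^\infty \left( f(x-z) - f(x) + z f'(x)\ind_{(0,1]}(z) \right) \nu(\mathrm{d}z),
\]
and extending $v_{b^*}$ below $0$ as in \eqref{eq:value-under-zero} --- which makes the relation $(\mathcal{L} - (p+q))v_{b^*} = 0$ hold automatically on $(-\infty,0)$, since $\mathrm{e}^{\Phi(p+q)\cdot}$ is the associated harmonic function because $\psi(\Phi(p+q)) = p+q$ --- one has to verify
\[
\max\left\{ (\mathcal{L} - q)v_{b^*}(x),\ 1 - v_{b^*}'(x) \right\} \leq 0, \qquad x \geq 0,
\]
with $(\mathcal{L}-q)v_{b^*} = 0$ on $(0,b^*)$ (true by construction of $h$) and $1 - v_{b^*}' = 0$ on $(b^*,\infty)$. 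The inequality $1 - v_{b^*}' \leq 0$ on $[0,b^*]$ is immediate from the valley shape of $h'$ and the definition of $b^*$; the inequality $(\mathcal{L}-q)v_{b^*} \leq 0$ on $(b^*,\infty)$ reduces, after substituting the linear form of $v_{b^*}$ there, to a monotonicity statement for $h'$ that again follows from Step 2. With the variational inequality in hand, a routine (if somewhat technical, because of the Parisian clock) verification argument closes the proof: for arbitrary $\pi \in \Pi_p$ apply the change-of-variables formula for semimartingales to $t \mapsto \mathrm{e}^{-qt}v_{b^*}(U^\pi_t)$, use the inequality together with the admissibility constraints on $L^\pi$ (no dividend exceeds the current surplus, none is paid while $U^\pi$ is negative) to show that $\mathrm{e}^{-q(t\wedge\ruintime)}v_{b^*}(U^\pi_{t\wedge\ruintime}) + \int_0^{t\wedge\ruintime}\mathrm{e}^{-qs}\,\mathrm{d}L^\pi_s$ is a supermartingale, and let $t\to\infty$ to obtain $v_{b^*}(x) \geq v_\pi(x)$ (using $v_{b^*}\geq 0$ to discard the boundary term); since $v_{b^*}\leq v_\ast$ trivially, equality follows and $\pi^{b^*}$ is optimal.

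\emph{Main obstacle.} The heart of the argument is the scale-function lemma of Step 2 --- translating log-convexity of the Lévy tail into the valley shape of $h'$ --- and the delicate point there is to control the Parisian correction (the convolution involving $W^{(p+q)}$) so that this unimodal shape is not destroyed; a secondary technical issue is the regularity of $v_{b^*}$ at the junction points $0$ and $b^*$ in the bounded-variation case, needed to legitimately apply the change-of-variables formula. Once the variational inequality is secured, Step 3 is standard.
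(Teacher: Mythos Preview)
Your outline tracks the paper's guess-and-verify structure, but Step~3 contains a genuine gap. You propose to apply It\^o to $t \mapsto \mathrm{e}^{-qt}v_{b^\ast}(U^\pi_t)$ and conclude that $\mathrm{e}^{-q(t\wedge\ruintime)}v_{b^\ast}(U^\pi_{t\wedge\ruintime}) + \int_0^{t\wedge\ruintime}\mathrm{e}^{-qs}\,\mathrm{d}L^\pi_s$ is a supermartingale. This is false: before Parisian ruin the controlled process may spend time below zero, and on $\{U^\pi_t<0\}$ the drift of $\mathrm{e}^{-qt}v_{b^\ast}(U^\pi_t)$ equals $\mathrm{e}^{-qt}(\Gamma-q)v_{b^\ast}(U^\pi_t)=p\,\mathrm{e}^{-qt}v_{b^\ast}(U^\pi_t)>0$ (precisely because $(\Gamma-(p+q))v_{b^\ast}=0$ there, as you yourself note). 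Stopping at $\ruintime$ does not neutralise this positive drift, since during these sub-zero excursions $\ruintime$ has not yet occurred. The paper handles this by applying the change-of-variables formula not to $\mathrm{e}^{-qt}v_{b^\ast}(U^\pi_t)$ but to $\mathrm{e}^{-qt-p\int_0^t\ind_{(-\infty,0)}(U^\pi_r)\,\mathrm{d}r}v_{b^\ast}(U^\pi_t)$; the occupation-time factor produces exactly the missing $-p\ind_{(-\infty,0)}v_{b^\ast}$ term, so the relevant drift becomes $(\Gamma-q-p\ind_{(-\infty,0)})v_{b^\ast}\leq 0$. One then recovers the dividend value via the identity $\mathrm{e}^{-p\int_0^s\ind_{(-\infty,0)}(U^\pi_r)\,\mathrm{d}r}=\e_x\bigl[\ind_{\{\ruintime>s\}}\,\big|\,\mathcal{F}_\infty\bigr]$, which is where the Parisian exponential clock actually enters the analysis. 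Your parenthetical ``somewhat technical, because of the Parisian clock'' hides exactly the idea that makes the verification work.

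A smaller point concerns Step~2. You describe $h$ as ``assembled from $W^{(q)}$, $W^{(p+q)}$ and their convolution'' and flag controlling this convolution as the main obstacle. In the paper the relevant function is $h(x)=Z_q(x,\Phi(p+q))=p\int_0^\infty\mathrm{e}^{-\Phi(p+q)y}W^{(q)}(x+y)\,\mathrm{d}y$, whence $h'(x)=p\int_0^\infty\mathrm{e}^{-\Phi(p+q)y}W^{(q)\prime}(x+y)\,\mathrm{d}y$. Once one invokes the result of \cite{loeffen-renaud_2010} that a log-convex L\'evy tail forces $W^{(q)\prime}$ to be log-convex, the log-convexity (hence convexity, hence valley shape) of $h'$ is immediate, since mixtures and limits of log-convex functions are log-convex. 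There is no delicate Parisian correction to control: the entire Parisian effect sits in the exponential weight $\mathrm{e}^{-\Phi(p+q)y}$, which is harmless for this argument.
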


The original version of de Finetti's optimal dividends problem, i.e.\ when the time of ruin is the first passage time below the critical level (intuitively, when $p \to \infty$), has been extensively studied. In a spectrally negative Lévy model, following the work of Avram, Palmowski \& Pistorius \cite{avram-et-al_2007}, an important breakthrough was made by Loeffen \cite{loeffen_2008}; in the latter, a sufficient condition, on the Lévy measure $\nu$, is given for a barrier strategy to be optimal. This condition was further relaxed by Loeffen \& Renaud \cite{loeffen-renaud_2010}; in this other paper, it is shown that if the tail of the Lévy measure is log-convex then a barrier strategy is optimal for de Finetti's optimal dividends problem with an affine penalty function at ruin (if we set $S=K=0$ in that paper, we recover the classical problem). To the best of our knowledge, this still stands as the mildest condition for the optimality of a barrier strategy in a spectrally negative Lévy model. Finally, note that Czarna \& Palmowski \cite{czarna-palmowski_2014} have considered de Finetti's control problem with deterministic Parisian delays.

The rest of the paper is organized as follows. First, we compute the value function of an arbitrary horizontal barrier strategy and then find the optimal barrier level $b^\ast_p$ (see the definition in~\eqref{eq:optimal-barrier-level}). Second, we derive the appropriate verification lemma for this control problem and prove that the barrier strategy at level $b^\ast_p$ is optimal.

\section{Horizontal barrier strategies}

Before computing the value function of an arbitrary barrier strategy at level $b$, we have to define another family of scale functions, also called second $q$-scale functions of $X$.

\subsection{Second family of scale functions}

For each $x \in \reals$ and $q,\theta \geq 0$, define as in \cite{albrecher-et-al_2016}
\begin{equation}\label{eq:Zq-def}
Z_q (x,\theta) = \mathrm{e}^{\theta x} \left( 1 - (\psi (\theta)-q) \int_0^x \mathrm{e}^{-\theta y} W^{(q)}(y) \mathrm{d}y \right) .
\end{equation}
Note that, for $x \leq 0$ or for $\theta=\Phi(q)$, we have $Z_q (x,\theta)=\mathrm{e}^{\theta x}$. In what follows, $Z_q^\prime (x,\theta)$ will represent the derivative with respect to the first argument. Consequently, for $x>0$, we have $Z_q^\prime (x,\theta) = \theta Z_q (x,\theta) - (\psi (\theta)-q) W^{(q)}(x)$ and, for $x<0$, we have $Z_q^\prime (x,\theta)=\theta \mathrm{e}^{\theta x}$. Note that $Z_q^\prime (0-,\theta)=Z_q^\prime (0+,\theta)=\theta$ if and only if $W^{(q)}(0)=0$, i.e.\ if and only if $X$ has paths of unbounded variation.

In this paper, we will encounter the function $Z_q$ when $\theta=\Phi(p+q)$, that is the function
$$
Z_q (x,\Phi(p+q)) = \mathrm{e}^{\Phi(p+q) x} \left( 1 - p \int_0^x \mathrm{e}^{-\Phi(p+q) y} W^{(q)}(y) \mathrm{d}y \right) ,
$$
from which we deduce that, for $x>0$,
\begin{equation}\label{eq:Zqprime-def}
Z_q^\prime (x,\Phi(p+q))=\Phi(p+q)Z_q (x,\Phi(p+q))-pW^{(q)}(x) .
\end{equation}
Consequently, set $Z_q^\prime (0,\Phi(p+q))=\Phi(p+q)-pW^{(q)}(0+)$. Since we assume that $p>0$, we have that $\Phi(p+q)>\Phi(q)$ and we can write
\begin{equation}\label{eq:Zq-second-rep}
Z_q (x,\Phi(p+q)) = p \int_0^\infty \mathrm{e}^{-\Phi(p+q) y} W^{(q)}(x+y) \mathrm{d}y .
\end{equation}
Then, for $x > 0$, we have
\begin{equation}\label{eq:Zqprime-second-rep}
Z_q^\prime (x,\Phi(p+q)) = p \int_0^\infty \mathrm{e}^{-\Phi(p+q) y} W^{(q)\prime}(x+y) \mathrm{d}y ,
\end{equation}
which is well defined since $W^{(q)}$ is differentiable almost everywhere (see e.g.\ Lemma 2.3 in \cite{kuznetsov-e-tal_2012}). Clearly, $x \mapsto Z_q (x,\Phi(p+q))$ is a non-decreasing continuous function.

\subsection{Value function of a barrier strategy}

Here is the value of an arbitrary admissible barrier strategy:
\begin{proposition}\label{P:value-barrier}
For $q,b \geq 0$, the value function associated to $\pi_b$ is given by
\begin{equation}\label{eq:value-barrier}
v_b (x) =
\begin{cases}
\frac{Z_q (x,\Phi(p+q))}{Z_q^\prime (b,\Phi(p+q))} & \text{for $x \in (-\infty,b]$,}\\
x-b+v_b (b) & \text{for $x \in (b,\infty)$.}
\end{cases}
\end{equation}
\end{proposition}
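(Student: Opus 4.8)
The plan is to compute $v_b$ by distinguishing the three regions $x > b$, $0 \le x \le b$, and $x < 0$, and then to identify the constant $v_b(b)$ using a first-passage (excursion-straddling) argument together with the fluctuation identities recalled in the introduction.

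First I would treat $x > b$. Under the barrier strategy $\pi_b$, starting above $b$ forces an immediate lump dividend payment of size $x - b$, after which the controlled process starts afresh from $b$; hence $v_b(x) = x - b + v_b(b)$, which is the second line of \eqref{eq:value-barrier}. Next, for $x < 0$, I would invoke \eqref{eq:value-under-zero} directly: since no dividends are paid while the controlled surplus is negative, the process must first climb back to $0$ before the Parisian clock rings, giving $v_b(x) = \mathrm{e}^{\Phi(p+q)x} v_b(0)$. Note this is consistent with the claimed formula, because $Z_q(x,\Phi(p+q)) = \mathrm{e}^{\Phi(p+q)x}$ for $x \le 0$, so the top line reads $v_b(x) = \mathrm{e}^{\Phi(p+q)x} Z_q(0,\Phi(p+q))/Z_q'(b,\Phi(p+q)) = \mathrm{e}^{\Phi(p+q)x} v_b(0)$.

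The substantive part is the region $0 \le x \le b$. Here the controlled process $U^b$ coincides with $X$ reflected at $b$ from above, killed at the time $\ruinbarrier$ of Parisian ruin. Starting from such an $x$, dividends are paid only at the running-maximum times, i.e.\ exactly on the event that $X$ reaches $b$ before the excursion below $0$ lasts longer than its exponential clock. The standard approach is to decompose the trajectory at $\tau_0^-$, the first passage of $X$ below $0$: on $\{\tau_0^- < \tau_b^+\}$, either an independent $\mathbf{e}_p$-clock rings during the resulting excursion (and the process is killed, contributing nothing) or the excursion ends (the process returns to $0$) before the clock rings, and by the strong Markov property we restart from $0$. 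This yields an equation of the form $v_b(x) = A(x)\, v_b(b) + C(x)\, v_b(0)$ where the coefficients are expressible via $W^{(q)}$ and the "Parisian" scale object $Z_q(\cdot,\Phi(p+q))$; here I would appeal to the known exit/occupation identities for Parisian-ruin problems in the spectrally negative Lévy setting (the representations \eqref{eq:Zq-second-rep}--\eqref{eq:Zqprime-second-rep} and the results of \cite{baurdoux-et-al_2016, albrecher-et-al_2016, loeffen-renaud_2010}). Writing $h(x) := Z_q(x,\Phi(p+q))$, one checks that $h$ is precisely the function appearing in the Parisian two-sided exit problem, so the ansatz $v_b(x) = h(x)/h'(b)$ is natural; one verifies it gives $v_b'(b) = 1$ (the smooth-fit / boundary condition forced by the reflection at $b$), has the correct behaviour at and below $0$, and satisfies the same recursive excursion equation, which pins it down uniquely.

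The main obstacle I anticipate is \emph{assembling and justifying the excursion-decomposition identity cleanly}: one must track the interplay between the reflection at $b$, the killing mechanism governed by the family of independent exponential clocks $\mathbf{e}_p^{g_t^\pi}$ attached to successive excursions below $0$, and the discounting at rate $q$, and show that all the relevant quantities collapse into $h = Z_q(\cdot,\Phi(p+q))$ and $W^{(q)}$. Equivalently, one can sidestep most of the probabilistic bookkeeping by verifying that the candidate on the right-hand side of \eqref{eq:value-barrier} is $C^1$ across $b$ (with $v_b'(b)=1$), is $C^1$ (resp.\ $C^2$ in the unbounded-variation case) where required, solves $(\mathcal{L} - q)v_b = 0$ on $(0,b)$, $(\mathcal{L} - (p+q))v_b + p\,v_b(0) \cdot (\text{correction}) = 0$ on $(-\infty,0)$ in the appropriate distributional sense, and has the right growth; uniqueness of the associated boundary-value problem then identifies it with $v_b$. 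I would present whichever of these two routes is shorter given the identities already available, most likely the direct excursion computation leaning on \eqref{eq:first-passage-above} and \eqref{eq:value-under-zero} together with the scale-function representation \eqref{eq:Zq-second-rep}.
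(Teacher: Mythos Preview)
Your treatment of $x > b$ and $x < 0$ matches the paper's. For $0 \le x \le b$, the paper takes a shorter and rather different route than either of your two suggestions.

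Rather than decomposing at $\tau_0^-$ and tracking the successive excursion clocks, the paper simply cites the two-sided Parisian exit identity (Equation~(16) of \cite{lkabous-renaud_2019})
\[
\e_x\bigl[\mathrm{e}^{-q\tau_a^+}\ind_{\{\tau_a^+ < \kappa^p\}}\bigr] = \frac{Z_q(x,\Phi(p+q))}{Z_q(a,\Phi(p+q))}, \qquad x \le a,
\]
where $\kappa^p$ is the Parisian ruin time of the \emph{uncontrolled} process $X$. By the strong Markov property at $\tau_b^+$ this gives $v_b(x) = v_b(b)\, Z_q(x,\Phi(p+q))/Z_q(b,\Phi(p+q))$ for $0 \le x \le b$, so only the constant $v_b(b)$ remains. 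The paper identifies it not via smooth fit or a verification/uniqueness argument, but by the squeeze argument of \cite{renaud-zhou_2007}: sandwiching the reflected process between first-passage problems at levels $b \pm 1/n$ yields
\[
\Bigl(v_b(b) + \tfrac{1}{n}\Bigr)\,\e_{b-1/n}\bigl[\mathrm{e}^{-q\tau_b^+}\ind_{\{\tau_b^+ < \kappa^p\}}\bigr]
\;\le\; v_b(b) \;\le\;
\Bigl(v_b(b) + \tfrac{1}{n}\Bigr)\,\e_b\bigl[\mathrm{e}^{-q\tau_{b+1/n}^+}\ind_{\{\tau_{b+1/n}^+ < \kappa^p\}}\bigr] + o(1/n),
\]
and letting $n \to \infty$ forces $v_b(b) = Z_q(b,\Phi(p+q))/Z_q'(b,\Phi(p+q))$.

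Your excursion decomposition would essentially re-derive the cited exit identity, and your smooth-fit condition $v_b'(b) = 1$ is something one \emph{obtains} from the formula rather than a legitimate input without an accompanying uniqueness theorem for the boundary-value problem. Both of your routes can be made rigorous, but each is longer; the paper's approach buys brevity by offloading the Parisian fluctuation theory to a citation and reducing the remaining work to a single limit.
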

\begin{proof}
See the proof in Appendix~\ref{A:value-barrier}.
\end{proof}


Using Parisian ruin with rate $p$ allows to fill in the spectrum of possibilities between classical ruin (no delay, $p \to \infty$) and no ruin at all (infinite delays, $p \to 0$). To illustrate this, let us have a look at the behaviour of $v_b$, which depends implicitly on the Parisian rate $p$, when:
\begin{enumerate}
\item $p \to 0$ (infinite delays, no ruin);
\item $p \to \infty$ (no delay, classical ruin).
\end{enumerate}

First, it is known that (see e.g.\ Equation~(3.15) in \cite{avram-et-al_2007}), if there is no ruin, then
$$
\e_b \left[ \int_0^{\infty} \mathrm{e}^{-q t} \mathrm{d}L_t^b \right] = \frac{1}{\Phi(q)} .
$$
Using the strong Markov property and the spectral negativity of $X$, for $-\infty < x \leq b$, we thus have
$$
\e_x \left[ \int_0^{\infty} \mathrm{e}^{-q t} \mathrm{d}L_t^b \right] = \frac{\mathrm{e}^{-\Phi(q) (b-x)}}{\Phi(q)} ,
$$
where we used again the identity in~\eqref{eq:first-passage-above}.

On the other hand, for $-\infty < x \leq b$, using Equation~\eqref{eq:value-barrier}, we get
$$
v_b(x) = \frac{Z_q (x,\Phi(p+q))}{Z_q^\prime (b,\Phi(p+q))} \underset{p \to 0}{\longrightarrow} \frac{Z_q (x,\Phi(q))}{Z_q^\prime (b,\Phi(q))} = \frac{\mathrm{e}^{\Phi(q) x}}{\Phi(q) \mathrm{e}^{\Phi(q) b}} = \frac{1}{\Phi(q)} \mathrm{e}^{-\Phi(q) (b-x)} .
$$


Second, recall that the value of a barrier strategy at level $b$ subject to classical ruin is given by
$$
\e_b \left[ \int_0^{\sigma^b_\infty} \mathrm{e}^{-q t} \mathrm{d}L_t^b \right] = \frac{W^{(q)}(x)}{W^{(q)\prime}(b)} , \quad x \leq b ,
$$
where $\sigma^b_\infty:=\inf \left\lbrace t>0 \colon U_t^b < 0 \right\rbrace$; see e.g.\ \cite{avram-et-al_2007}. Since $W^{(q)}(x)=0$ if $x<0$, this last identity is valid for all $-\infty< x \leq b$. On the other hand, for $-\infty < x \leq b$, using Equation~\eqref{eq:value-barrier} and the fact that $\lim_{p \to \infty} \Phi(p+q) = \infty$, we get
$$
v_b(x) = \frac{Z_q (x,\Phi(p+q))}{Z_q^\prime (b,\Phi(p+q))} = \frac{\Phi(p+q) \int_0^\infty \mathrm{e}^{-\Phi(p+q) y} W^{(q)}(x+y) \mathrm{d}y}{\Phi(p+q) \int_0^\infty \mathrm{e}^{-\Phi(p+q) y} W^{(q)\prime}(b+y) \mathrm{d}y} \underset{p \to \infty}{\longrightarrow} \frac{W^{(q)}(x)}{W^{(q)\prime}(b)} ,
$$
where, in the last step, we used the Initial Value Theorem for Laplace transforms (see e.g.\ \cite{feller_1971}). Note that, when $x<0$, we have directly
$$
Z_q (x,\Phi(p+q))=\mathrm{e}^{\Phi(p+q) x} \underset{p \to \infty}{\longrightarrow} 0 = W^{(q)}(x) .
$$

\subsection{Optimal barrier level}

As defined in \cite{loeffen_2008,loeffen-renaud_2010}, the optimal barrier level in de Finetti's classical control problem is given by
$$
b_\infty^\ast = \sup \left\lbrace b \geq 0 \colon W^{(q)\prime} (b) \leq W^{(q)\prime} (x) , \; \text{for all $x \geq 0$} \right\rbrace .
$$
Similarly, let us define the candidate for the optimal barrier level for the current version of this control problem by
\begin{equation}\label{eq:optimal-barrier-level}
b_p^\ast = \sup \left\lbrace b \geq 0 \colon Z_q^\prime (b,\Phi(p+q)) \leq Z_q^\prime (x,\Phi(p+q)) , \; \text{for all $x \geq 0$} \right\rbrace .
\end{equation}

\begin{proposition}\label{P:optimal-barrier-level}
Fix $q \geq 0$ and $p>0$. Suppose the tail of the Lévy measure is log-convex.
Then, we have that $0 \leq b_p^\ast \leq b_\infty^\ast$. Further, $b_p^\ast > 0$ if and only if one of the following three cases hold:
\begin{enumerate}
\item[(a)] $\sigma>0$ and $\left(\Phi(p+q) \right)^2/p < 2/\sigma^2$;
\item[(b)] $\sigma=0$ and $\nu(0,\infty)=\infty$;
\item[(c)] $\sigma=0$, $\nu(0,\infty)<\infty$ and
$$
\frac{c \Phi(p+q)}{p} \left( \Phi(p+q) - \frac{p}{c} \right) < \frac{q+\nu(0,\infty)}{c} ,
$$
where $c=\gamma+\int_0^1 x \nu(\mathrm{d}x)$.
\end{enumerate}
\end{proposition}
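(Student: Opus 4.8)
The plan is to analyze the function $x \mapsto Z_q'(x,\Phi(p+q))$ on $[0,\infty)$ using the representation \eqref{eq:Zqprime-second-rep}, namely $Z_q'(x,\Phi(p+q)) = p\int_0^\infty \mathrm{e}^{-\Phi(p+q)y} W^{(q)\prime}(x+y)\,\mathrm{d}y$. The key structural input is that, under the log-convexity assumption on the tail of the Lévy measure, $W^{(q)\prime}$ is known (from the work of Loeffen \& Renaud \cite{loeffen-renaud_2010}) to be first non-increasing and then non-decreasing on $(0,\infty)$ — i.e.\ there is a unique minimizing point $b_\infty^\ast$. Since $Z_q'(\cdot,\Phi(p+q))$ is an exponentially weighted average of shifted copies of $W^{(q)\prime}$, it inherits the same ``decreasing-then-increasing'' shape: its derivative equals $p\int_0^\infty \mathrm{e}^{-\Phi(p+q)y} W^{(q)\prime\prime}(x+y)\,\mathrm{d}y$ (interpreting $W^{(q)\prime\prime}$ in the a.e.\ sense), which changes sign from $-$ to $+$ exactly once because $W^{(q)\prime\prime}$ does. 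This gives well-definedness of $b_p^\ast$ via \eqref{eq:optimal-barrier-level} as the unique minimizer.

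For the inequality $b_p^\ast \leq b_\infty^\ast$: the point $b_\infty^\ast$ is characterized by $W^{(q)\prime}(x) \geq W^{(q)\prime}(b_\infty^\ast)$ for all $x$, with $W^{(q)\prime}$ non-decreasing on $[b_\infty^\ast,\infty)$. Hence, for any $x \geq b_\infty^\ast$ and any $y \geq 0$, we have $W^{(q)\prime}(x+y) \geq W^{(q)\prime}(b_\infty^\ast + y) \geq W^{(q)\prime}(b_\infty^\ast)$, and integrating against the exponential weight shows $Z_q'(x,\Phi(p+q)) \geq Z_q'(b_\infty^\ast,\Phi(p+q))$ for such $x$; since on $[b_\infty^\ast,\infty)$ we also get monotonicity, the minimizer $b_p^\ast$ must lie in $[0,b_\infty^\ast]$. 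The bound $b_p^\ast \geq 0$ is immediate from the definition.

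For the characterization of when $b_p^\ast > 0$: by the decreasing-then-increasing shape, $b_p^\ast > 0$ holds if and only if $Z_q''(0+,\Phi(p+q)) < 0$, i.e.\ the one-sided derivative of $x\mapsto Z_q'(x,\Phi(p+q))$ at $0$ is strictly negative. I would compute this quantity explicitly. Differentiating \eqref{eq:Zqprime-def} gives, for $x>0$, $Z_q''(x,\Phi(p+q)) = \Phi(p+q) Z_q'(x,\Phi(p+q)) - p W^{(q)\prime}(x)$, so $Z_q''(0+,\Phi(p+q)) = \Phi(p+q)\bigl(\Phi(p+q) - p W^{(q)}(0+)\bigr) - p W^{(q)\prime}(0+)$. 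Now invoke the standard small-time/initial behaviour of scale functions (see e.g.\ \cite{kuznetsov-e-tal_2012,kyprianou_2014}): (i) if $\sigma>0$ then $W^{(q)}(0+)=0$ and $W^{(q)\prime}(0+) = 2/\sigma^2$; (ii) if $\sigma=0$ and $\nu(0,\infty)=\infty$ (unbounded variation, no Gaussian part) then $W^{(q)}(0+)=0$ and $W^{(q)\prime}(0+)=+\infty$; (iii) if $\sigma=0$ and $\nu(0,\infty)<\infty$ (bounded variation) then $W^{(q)}(0+) = 1/c$ with $c = \gamma + \int_0^1 x\,\nu(\mathrm{d}x)$, and $W^{(q)\prime}(0+) = (q+\nu(0,\infty))/c^2$. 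Substituting each case into the expression for $Z_q''(0+,\Phi(p+q))$ and rearranging the inequality $Z_q''(0+,\Phi(p+q)) < 0$ yields exactly conditions (a), (b), (c) respectively — case (b) always satisfies it since the $-pW^{(q)\prime}(0+) = -\infty$ term dominates.

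The main obstacle I expect is the rigorous justification that differentiating under the integral sign in \eqref{eq:Zqprime-second-rep} is legitimate and that the resulting ``second derivative'' inherits the single sign change — $W^{(q)}$ is only known to be $C^1$ (and $C^2$) under extra smoothness of $\nu$, so in general $W^{(q)\prime\prime}$ exists only a.e. One clean way around this is to argue the shape of $Z_q'(\cdot,\Phi(p+q))$ directly: write $Z_q'(x,\Phi(p+q)) = p\,\mathrm{e}^{\Phi(p+q)x}\int_x^\infty \mathrm{e}^{-\Phi(p+q)u} W^{(q)\prime}(u)\,\mathrm{d}u$, observe that $e^{-\Phi(p+q)x}Z_q'(x,\Phi(p+q))$ is absolutely continuous with derivative $-p\,\mathrm{e}^{-\Phi(p+q)x}W^{(q)\prime}(x)$ (this needs no second derivative of $W^{(q)}$), and then combine with \eqref{eq:Zqprime-def} to obtain the sign-change structure of $Z_q'$ purely from that of $W^{(q)\prime}$; alternatively one can approximate $\nu$ by measures with smooth densities and pass to the limit, as is done in \cite{loeffen-renaud_2010}. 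Once the unimodality (in the "valley" sense) of $Z_q'(\cdot,\Phi(p+q))$ is secured, the rest is bookkeeping with the scale-function boundary values.
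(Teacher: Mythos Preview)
Your proposal is correct and tracks the paper's proof closely: both use the representation \eqref{eq:Zqprime-second-rep} to deduce the shape of $Z_q'(\cdot,\Phi(p+q))$, show it is strictly increasing past $b_\infty^\ast$ to obtain $b_p^\ast\le b_\infty^\ast$, and then reduce the dichotomy $b_p^\ast>0$ to the sign of $Z_q''(0+,\Phi(p+q))=\Phi(p+q)\bigl(\Phi(p+q)-pW^{(q)}(0+)\bigr)-pW^{(q)\prime}(0+)$ together with the standard boundary values of $W^{(q)}$ and $W^{(q)\prime}$.

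The one place where the paper proceeds differently is precisely the obstacle you flagged. Rather than differentiating under the integral and tracking the sign change of $W^{(q)\prime\prime}$, the paper invokes the full strength of Theorem~1.2 in \cite{loeffen-renaud_2010}, namely that $W^{(q)\prime}$ is \emph{log-convex} (not merely valley-shaped). Since log-convexity is preserved under translation, under multiplication by positive constants, and under pointwise limits of sums (hence under Riemann integration), it follows at once that $x\mapsto Z_q'(x,\Phi(p+q))$ is log-convex, hence convex, on $(0,\infty)$; combined with $Z_q'(x,\Phi(p+q))\to\infty$ this yields well-definedness of $b_p^\ast$ and the single-valley shape without ever touching $W^{(q)\prime\prime}$ and without approximation. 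Your proposed workarounds would also succeed, but the log-convexity route sidesteps the regularity issue entirely. One small imprecision in your case~(ii): the parenthetical ``unbounded variation'' is not quite right, since $\sigma=0$ with $\nu(0,\infty)=\infty$ can still be of bounded variation when $\int_0^1 x\,\nu(\mathrm{d}x)<\infty$; in that subcase $W^{(q)}(0+)=1/c\neq 0$, but one still has $W^{(q)\prime}(0+)=(q+\nu(0,\infty))/c^2=+\infty$, so your conclusion stands.
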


\begin{proof}
See the proof in Appendix~\ref{A:proof-optimal-barrier}.
\end{proof}

First of all, note from Proposition~\ref{P:optimal-barrier-level} that the optimal barrier level $b_p^\ast$, when Parisian ruin with rate $p$ is implemented, is always lower than the optimal barrier level $b_\infty^\ast$ when classical ruin is used.

Also, from (a) and (c), when $\sigma>0$ or $\nu(0,\infty)<\infty$, we have that if the Parisian rate $p>0$ is small enough (large delays), then the barrier strategy at level zero is optimal. If $X_t=ct+\sigma B_t$ is a Brownian motion with drift, then
$$
\Phi(p+q)= \frac{1}{\sigma^2} \left(\sqrt{c^2+2 \sigma^2 (p+q)}-c \right) .
$$
In this case, using (a), we can verify that if the Brownian coefficient $\sigma$ is large enough, then the barrier strategy at level zero is optimal.

Interestingly, in (c), the condition can be re-written as follows:
$$
\frac{c \Phi(p+q)}{p} \; \e \left[ \int_0^{\sigma_\infty^0} \mathrm{e}^{-q t} \mathrm{d}L^0_t \right] < \e \left[ \int_0^{\sigma_p^0} \mathrm{e}^{-q t} \mathrm{d}L^0_t \right] = v_0(0) .
$$
See e.g.\ Equation (3.14) in \cite{avram-et-al_2007}.

\section{Verification lemma and proof of the main result}

Define the operator $\Gamma$ associated with $X$ by
\begin{equation}\label{eq:generator}
\Gamma v(x) = \gamma v^\prime (x)+\frac{\sigma^2}{2} v^{\prime \prime}(x) + \int_{0}^\infty \left( v(x-z)-v(x)+v'(x)z\ind_{(0,1]}(z) \right) \nu(\mathrm{d}z) ,
\end{equation}
where $v$ is a function defined on $\reals$ such that $\Gamma v(x)$ is well defined.

Next is the verification lemma of our stochastic control problem. As the controlled process is now allowed to spend time below the critical level, it is different from the classical verification lemma (see \cite{loeffen_2008}).

\begin{lemma}\label{verificationlemma}
Let $\Gamma$ be the operator defined in~\eqref{eq:generator}. Suppose that $\hat{\pi} \in \Pi_p$ is such that $v_{\hat{\pi}}$ is sufficiently smooth and that, for all $x \in \reals$,
$$
\left(\Gamma-q-p\ind_{(-\infty,0)} \right) v_{\hat{\pi}} (x) \leq 0
$$
and, for all $x>0$, $v^\prime_{\hat{\pi}}(x) \geq 1$. In this case, $\hat{\pi}$ is an optimal strategy for the control problem.
\end{lemma}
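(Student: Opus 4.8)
The plan is to show that $h := v_{\hat{\pi}}$ dominates $v_\pi$ for every admissible $\pi \in \Pi_p$ and every $x \in \reals$; since $\hat{\pi} \in \Pi_p$ also gives $v_\ast \ge v_{\hat{\pi}} = h$, this will force $h = v_\ast$ and hence the optimality of $\hat{\pi}$. So I would fix an arbitrary $\pi \in \Pi_p$, with controlled surplus $U = U^\pi = X - L^\pi$, and first replace the excursion-based Parisian clock by an absolutely continuous discount. Writing $A_t = \int_0^t \ind_{(-\infty,0)}(U_s)\,\mathrm{d}s$ for the occupation time of $(-\infty,0)$ by $U$, the memoryless property of the independent exponential delays attached to the successive negative excursions of $U$ shows that Parisian ruin with rate $p$ is the same as killing the path at rate $p$ while $U$ is negative; conditioning these delays out gives $\p_x(\ruintime > t \mid \text{path of } U) = \mathrm{e}^{-pA_t}$, so that, by Fubini's theorem,
$$
v_\pi(x) = \e_x\!\left[ \int_{[0,\infty)} \mathrm{e}^{-qt} \ind_{\{t < \ruintime\}}\,\mathrm{d}L^\pi_t \right] = \e_x\!\left[ \int_{[0,\infty)} \mathrm{e}^{-qt - pA_t}\,\mathrm{d}L^\pi_t \right] .
$$

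The heart of the argument would be an application of the change-of-variables formula for semimartingales to $t \mapsto \mathrm{e}^{-qt - pA_t} h(U_t)$, the process $U$ being a semimartingale built from the drift, the Gaussian part (with bracket $\sigma^2 t$) and the negative jumps of $X$, minus the finite-variation dividend process $L^\pi = L^{\pi,\mathrm{c}} + L^{\pi,\mathrm{d}}$. After checking that $\Gamma h(x)$ is well defined for every $x$ — using that $h$ is smooth enough by hypothesis, that $h(x) = \mathrm{e}^{\Phi(p+q)x}h(0)$ for $x<0$ by \eqref{eq:value-under-zero} (so $h$ is bounded on $(-\infty,0)$) and that $\int_0^\infty (1\wedge z^2)\,\nu(\mathrm{d}z) < \infty$ — one recombines the drift, the Gaussian correction and the compensated jumps of $X$ into $\Gamma$, the factor $\mathrm{e}^{-qt}$ producing the $-q$ term and the factor $\mathrm{e}^{-pA_t}$ the $-p\ind_{(-\infty,0)}$ term; what is left is a local martingale $M$ (the Gaussian and compensated-jump contributions, to be stopped along a localizing sequence $T_n \uparrow \infty$ so as to become a true martingale) plus the contribution of the dividends, namely $-\int_{[0,t]} \mathrm{e}^{-qs-pA_s} h'(U_{s-})\,\mathrm{d}L^{\pi,\mathrm{c}}_s$ together with the downward jumps of $h(U)$ produced by lump dividend payments. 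The two hypotheses of the lemma then enter: $(\Gamma - q - p\ind_{(-\infty,0)})h \le 0$ makes the $\mathrm{d}s$-term non-increasing, while $h' \ge 1$ on $(0,\infty)$ — combined with admissibility, which forces dividends to be paid only while $U \ge 0$ and to keep $U \ge 0$ — gives $h'(U_{s-})\,\mathrm{d}L^{\pi,\mathrm{c}}_s \ge \mathrm{d}L^{\pi,\mathrm{c}}_s$ (using $h'(0+) \ge 1$) and, by the mean value theorem, a drop of $h(U)$ at each lump payment at least as large as that lump. Hence $t \mapsto \mathrm{e}^{-qt - pA_t}h(U_t) + \int_{[0,t]} \mathrm{e}^{-qs-pA_s}\,\mathrm{d}L^\pi_s$, stopped at $T_n$, is a supermartingale started at $h(x)$.

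It then remains to take expectations and pass to the limit. For all $t$ and $n$,
$$
\e_x\!\left[ \mathrm{e}^{-q(t\wedge T_n) - pA_{t\wedge T_n}} h(U_{t\wedge T_n}) \right] + \e_x\!\left[ \int_{[0,t\wedge T_n]} \mathrm{e}^{-qs-pA_s}\,\mathrm{d}L^\pi_s \right] \le h(x) ;
$$
since $h \ge 0$ (being a value function) the first expectation is non-negative and may be dropped, and then, letting $n \to \infty$ and $t \to \infty$, monotone convergence in the dividend integral together with the identity of the first paragraph yields $v_\pi(x) \le h(x)$. As $\pi \in \Pi_p$ was arbitrary, $v_{\hat{\pi}} = v_\ast$.

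I expect the genuinely delicate points to be, first, the rigorous justification of the two reductions above — that exponential Parisian ruin coincides with occupation-time killing, and the careful bookkeeping in the semimartingale change-of-variables formula (handling the continuous and purely discontinuous parts of $L^\pi$, possible simultaneous jumps of $X$ and $L^\pi$, and, should $h$ fail to be twice continuously differentiable at a barrier level, a Meyer--It\^{o}/Tanaka version of the formula or a smoothing of $h$); and, second, exhibiting a localizing sequence $T_n$ along which $M_{\cdot\wedge T_n}$ is a true martingale in spite of the at-most-linear growth of $h$ on $(0,\infty)$ and the possible unboundedness of $U$. Once these technical matters are settled, the structural hypotheses $(\Gamma - q - p\ind_{(-\infty,0)})h \le 0$ and $h' \ge 1$ are used exactly once each, as indicated.
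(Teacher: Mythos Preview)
Your proposal is correct and follows essentially the same route as the paper: both reduce exponential Parisian ruin to occupation-time killing at rate $p$ (the paper does this via an auxiliary Poisson process and conditioning on $\mathcal{F}_\infty$, you via the memoryless property), apply a change-of-variables/It\^o formula to $\mathrm{e}^{-qt-pA_t}v_{\hat{\pi}}(U^\pi_t)$, and then use the two hypotheses together with admissibility to obtain the desired inequality after localization and passage to the limit. Your write-up is in fact more explicit than the paper's about the technical bookkeeping (handling of $L^{\pi,\mathrm{c}}$ versus $L^{\pi,\mathrm{d}}$, localization, smoothness issues at a barrier), which the paper dispatches as ``standard arguments'' with a reference to the classical case.
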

\begin{proof}
Set $w:=v_{\hat{\pi}}$ and let $\pi \in \Pi_p$ be an arbitrary admissible strategy. As $w$ is sufficiently smooth, applying an appropriate change-of-variable/version of Ito's formula to the joint process $\left( t, \int_0^t \ind_{(-\infty,0)}(U^\pi_r) \mathrm{d}r , U^\pi_t \right)$ yields
\begin{multline*}
\mathrm{e}^{-q t - p \int_0^t \ind_{(-\infty,0)}(U^\pi_r) \mathrm{d}r} w \left(U^\pi_t \right) - w \left(U^\pi_0 \right) \\
= \int_0^t \mathrm{e}^{-q s - p \int_0^s \ind_{(-\infty,0)}(U^\pi_r) \mathrm{d}r} \left[ (\Gamma-q) w \left(U^\pi_s \right) - p \ind_{(-\infty,0)} \left(U^\pi_s \right) w \left(U^\pi_s \right) \right] \mathrm{d}s \\
- \int_0^t \mathrm{e}^{-q s - p \int_0^s \ind_{(-\infty,0)}(U^\pi_r) \mathrm{d}r} w^\prime \left(U^\pi_{s-} \right) \mathrm{d}L^\pi_s + M^\pi_t \\
+ \sum_{0 < s \leq t} \mathrm{e}^{-q s - p \int_0^s \ind_{(-\infty,0)}(U^\pi_r) \mathrm{d}r} \left[  w \left(U^\pi_{s-} - \Delta L^\pi_s \right) - w \left(U^\pi_{s-} \right) + w^\prime \left(U^\pi_{s-} \right) \Delta L^\pi_s \right] ,
\end{multline*}
where $M^\pi=\left\lbrace M^\pi_t , t \geq 0 \right\rbrace$ is a (local) martingale.

Let $\mathcal{F}_\infty$ denote the sigma-field generated by the whole trajectories of our processes. Consider an independent (of $\mathcal{F}_\infty$) Poisson process with intensity measure $p \, \mathrm{d}t$ and jump times $\left\lbrace T^p_i , i \geq 1 \right\rbrace$. Therefore, we can write
$$
\mathrm{e}^{- p \int_0^s \ind_{(-\infty,0)}(U^\pi_r) \mathrm{d}r} = \p_x \left( T^p_i \notin \left\lbrace r \in (0,s] \colon U^\pi_r < 0 \right\rbrace , \; \text{for all $i \geq 1$} \vert \mathcal{F}_\infty \right) = \e_x \left[ \ind_{\{\sigma_p^\pi > s\}} \vert \mathcal{F}_\infty \right]
$$
and consequently
\begin{multline*}
\e_x \left[ \int_0^t \mathrm{e}^{-q s - p \int_0^s \ind_{(-\infty,0)}(U^\pi_r) \mathrm{d}r} \mathrm{d}L^\pi_s \right] = \e_x \left[ \int_0^t \mathrm{e}^{-q s} \e_x \left[ \ind_{\{\sigma_p^\pi > s\}} \vert \mathcal{F}_\infty \right] \mathrm{d}L^\pi_s \right] \\
= \e_x \left[ \int_0^{\sigma_p^\pi \wedge t} \mathrm{e}^{-q s} \mathrm{d}L^\pi_s \right] ,
\end{multline*}
where we used the definition of a Riemann-Stieltjes integral and the monotone convergence theorem for conditional expectations.

Now, as for all $x \in \reals$,
$$
\left(\Gamma-q-p\ind_{(-\infty,0)} \right) w (x) \leq 0
$$
and, for all $x>0$, $w^\prime(x) \geq 1$, using standard arguments (see e.g.\ \cite{loeffen_2008}) and our definition of an admissible strategy, e.g.\ that $L^\pi$ is identically zero when $U^\pi$ is below zero, we get
$$
w(x) \geq \e_x \left[ \int_0^\infty \mathrm{e}^{-q s - p \int_0^s \ind_{(-\infty,0)}(U^\pi_r) \mathrm{d}r} \mathrm{d}L^\pi_s \right] = \e_x \left[ \int_0^{\sigma_p^\pi} \mathrm{e}^{-q s} \mathrm{d}L^\pi_s \right]=v_\pi(x) .
$$
This concludes the proof.
\end{proof}

The rest of this section is devoted to proving Theorem~\ref{T:main}, i.e.\ proving that an optimal strategy for the control problem is formed by the barrier strategy at level $b^\ast:=b_p^\ast$.

By the definition of $b^\ast$ given in~\eqref{eq:optimal-barrier-level}, for $0 \leq x \leq b^\ast$, we have
$$
v_{b^\ast}^\prime (x) = \frac{Z_q^\prime (x,\Phi(p+q))}{Z_q^\prime (b^\ast,\Phi(p+q))} \geq 1 .
$$
By the definition of $v_{b^\ast}$, for $x > b^\ast$, we have $v_{b^\ast}^\prime (x)=1$. This means $v_{b^\ast}^\prime (x) \geq 1$, for all $x \geq 0$.

Note that, for any $x \in \reals$, we have
\begin{align*}
\left( \Gamma - q-p \right) \mathrm{e}^{\Phi(p+q) x} &= \mathrm{e}^{\Phi(p+q) x} \left( \gamma \Phi(p+q) +\frac{\sigma^2}{2} \Phi^2(p+q) \right) \\
& \qquad + \mathrm{e}^{\Phi(p+q) x} \left[ \int_{0}^\infty \left( \mathrm{e}^{-\Phi(p+q) z}-1+\Phi(p+q) z\ind_{(0,1]}(z) \right) \nu(\mathrm{d}z) - (q+p) \right] \\
&= \mathrm{e}^{\Phi(p+q) x} \left[ \psi \left( \Phi(p+q) \right) - (q+p)\right] = 0 .
\end{align*}
Consequently, for $x<0$, we have
$$
\left(\Gamma-q-p\right) Z_q (x,\Phi(p+q)) = 0
$$
and, for $x \geq 0$, using~\eqref{eq:Zq-second-rep}, we have
$$
\left(\Gamma-q\right) Z_q (x,\Phi(p+q)) = p \int_0^\infty \mathrm{e}^{\Phi(p+q) y} \left(\Gamma-q\right) W^{(q)} (x+y) \mathrm{d}y = 0 ,
$$
since $\left(\Gamma-q\right) W^{(q)} (x) = 0$ for all $x>0$ (see e.g.\ \cite{biffis-kyprianou_2010}). As a consequence, and since $v_{b^\ast}$ is smooth in $x=b^\ast$, we have
$$
\left(\Gamma-q-p\ind_{(-\infty,0)} \right) v_{b^\ast} (x)=0 , \quad \text{for $x \leq b^\ast$.}
$$
All is left to verify now is that $\left(\Gamma-q \right) v_{b^\ast} (x) \leq 0$, for all $x > b^\ast$. It can be done following the same steps as in the proof of Theorem 2 in \cite{loeffen_2008}, thanks to the smoothness of the scale function $Z_q (\cdot, \Phi(p+q))$. The details are left to the reader.

\section*{Acknowledgements}

Funding in support of this work was provided by the Natural Sciences and Engineering Research Council of Canada (NSERC).

%
%
\bibliographystyle{abbrv}
\bibliography{references-SNLPs.bib}

\appendix

\section{Proof of Proposition~\ref{P:value-barrier}}\label{A:value-barrier}

To prove this result, we adapt the methodology used in \cite{renaud-zhou_2007}; see also Equation (15) in~\cite{albrecher-ivanovs_2014}. Let us define $\kappa^p$ as the time of Parisian ruin with rate $p$ for $X$ or, said differently, the time of Parisian ruin when the pay-no-dividend strategy, i.e.\ the strategy $\pi$ with $L_t^\pi \equiv 0$, is implemented. More precisely, define
$$
\kappa^p = \inf \left\lbrace t>0 \colon t-g_t > \mathbf{e}_p^{g_t} \; \text{and} \; X_t < 0 \right\rbrace ,
$$
where $g_t = \sup \left\lbrace 0 \leq s \leq t \colon X_s \geq 0 \right\rbrace$. Let us also define, for $a \in \reals$, the stopping time
$$
\tau_a^+ = \inf \left\lbrace t>0 \colon X_t >a \right\rbrace .
$$
It is known that (see e.g.\ Equation (16) in~\cite{lkabous-renaud_2019}), for $x \leq a$,
\begin{equation}\label{eq:two-sided}
\e_x \left[ \mathrm{e}^{-q \tau_a^+} \ind_{\{\tau_a^+<\kappa^p\}} \right] = \frac{Z_q(x,\Phi(p+q))}{Z_q(a,\Phi(p+q))} .
\end{equation}
As in \cite{renaud-zhou_2007}, we can show that
\begin{multline*}
\left( v_b(b) + \frac{1}{n} \right) \e_{b-1/n} \left[ \mathrm{e}^{-q \tau_b^+} \ind_{\{\tau_b^+<\kappa^p\}} \right] \\
\leq v_b(b) \leq \left( v_b(b) + \frac{1}{n} \right) \e_{b} \left[ \mathrm{e}^{-q \tau_{b+1/n}^+} \ind_{\{\tau_{b+1/n}^+<\kappa^p\}} \right] + o(1/n) .
\end{multline*}
The result for $x=b$ follows by taking a limit and then the result for $0 \leq x \leq b$ follows by using again the identity in~\eqref{eq:two-sided}. Finally, if $x<0$, then using~\eqref{eq:value-under-zero} we have
$$
v_b(x)=\mathrm{e}^{\Phi(p+q) x} \frac{Z_q (0,\Phi(p+q))}{Z_q^\prime (b,\Phi(p+q))}=\frac{Z_q (x,\Phi(p+q))}{Z_q^\prime (b,\Phi(p+q))} .
$$

\section{Proof of Proposition~\ref{P:optimal-barrier-level}}\label{A:proof-optimal-barrier}


Recall from~\eqref{eq:Zqprime-second-rep} that, for $x \in (0,\infty)$, we have
\begin{equation}\label{}
Z_q^\prime (x,\Phi(p+q)) = p \int_0^\infty \mathrm{e}^{-\Phi(p+q) y} W^{(q)\prime}(x+y) \mathrm{d}y .
\end{equation}
By Theorem 1.2 in \cite{loeffen-renaud_2010}, if the tail of the Lévy measure is log-convex, then $W^{(q)\prime}$ is log-convex. Using the properties of log-convex functions, as presented in \cite{roberts-varberg_1973}, we can deduce that $x \mapsto p \mathrm{e}^{-\Phi(p+q) y} W^{(q)\prime}(x+y)$ is log-convex on $(0,\infty)$, for any fixed $y \in (0,\infty)$. Then, as Riemann integrals are limits of partial sums, we have that $x \mapsto Z_q^\prime (x,\Phi(p+q))$ is also a log-convex function on $(0,\infty)$. In particular, $Z_q^\prime (\cdot,\Phi(p+q))$ is convex on $(0,\infty)$, so we can write, for some fixed $c>0$,
$$
Z_q^\prime (x,\Phi(p+q)) = Z_q^\prime (c,\Phi(p+q)) + \int_c^x Z_q^{\prime \prime -} (y,\Phi(p+q)) \mathrm{d}y,
$$
where $Z_q^{\prime \prime -} (\cdot,\Phi(p+q))$ is the left-hand derivative of $Z_q^{\prime} (\cdot,\Phi(p+q))$. Since $Z_q^{\prime \prime -} (\cdot,\Phi(p+q))$ is increasing and $\lim_{x\rightarrow\infty} Z_q^{\prime} (x,\Phi(p+q))=\infty$, we have that the function $Z_q^{\prime} (\cdot,\Phi(p+q))$ is ultimately strictly increasing. This proves that $b_p^\ast$ is well-defined.
%
%

It is known that $W^{(q)\prime}$ is strictly increasing on $(b_\infty^\ast,\infty)$; see \cite{loeffen-renaud_2010}. Then, using together the representations of $Z_q^{\prime} (x,\Phi(p+q))$ given in~\eqref{eq:Zqprime-def} and~\eqref{eq:Zqprime-second-rep}, we obtain
\begin{multline*}
Z_q^{\prime \prime} (x,\Phi(p+q))=\Phi(p+q) p \int_0^\infty \mathrm{e}^{-\Phi(p+q) y} W^{(q)\prime}(x+y) \mathrm{d}y - p W^{(q)\prime}(x) \\
> p W^{(q)\prime}(x) \int_0^\infty \Phi(p+q) \mathrm{e}^{-\Phi(p+q) y} \mathrm{d}y - p W^{(q)\prime}(x) = 0 ,
\end{multline*}
for all $x > b_\infty^\ast$. In other words, $x \mapsto Z_q^\prime (x,\Phi(p+q))$ is strictly increasing on $(b_\infty^\ast,\infty)$. Consequently, $b_p^\ast \leq b_\infty^\ast$.


The rest of the proof is similar to Lemma 3 in \cite{kyprianou-et-al_2012}, where a function closely related to one of the representations of $Z_q^\prime (x,\Phi(p+q))$ appears. For simplicity, set $g(x)=Z_q^\prime (x,\Phi(p+q))$. Using~\eqref{eq:Zqprime-def}, we can write, for $x>0$,
$$
g^\prime(x) = \Phi(p+q) \left( g(x) - \frac{p}{\Phi(p+q)} W^{(q)\prime}(x) \right) .
$$
It follows that $g^\prime(x)>0$ (resp.\ $g^\prime(x)<0$) if and only if $\displaystyle{g(x)>\frac{p}{\Phi(p+q)} W^{(q)\prime}(x)}$ (resp.\ $\displaystyle{g(x)<\frac{p}{\Phi(p+q)} W^{(q)\prime}(x)}$). This means $\displaystyle{g(b)>\frac{p}{\Phi(p+q)} W^{(q)\prime}(b)}$ for $b<b_p^\ast$ and $\displaystyle{g(b)<\frac{p}{\Phi(p+q)} W^{(q)\prime}(b)}$ for $b>b_p^\ast$. If $b_p^\ast>0$ then $g(b_p^\ast)=(p/\Phi(p+q)) W^{(q)\prime} (b_p^\ast)$.

We deduce that $b_p^\ast>0$ if and only if $g(0+)<(p/\Phi(p+q)) W^{(q)\prime} (0+)$, where $g(0+)=\Phi(p+q)-pW^{(q)}(0)$. Written differently, we have $b_p^\ast>0$ if and only if
$$
\Phi(p+q)-pW^{(q)}(0) < \frac{p}{\Phi(p+q)} W^{(q)\prime} (0+) .
$$
If $\sigma>0$, then $W^{(q)}(0)=0$ and $W^{(q)\prime}(0+)=2/\sigma^2$, which implies that $b_p^\ast>0$ if and only if
$$
\frac{ \left(\Phi(p+q) \right)^2}{p} < \frac{2}{\sigma^2} .
$$
If $\sigma=0$ and $\nu(0,\infty)=\infty$, then $W^{(q)\prime}(0+)=\infty$, which implies that $b_p^\ast>0$. Finally, if $\sigma=0$ and $\nu(0,\infty)<\infty$, then $W^{(q)}(0)=1/c$, where $c>0$ is the drift, and $W^{(q)\prime}(0+)=(q+\nu(0,\infty))/c^2$, which implies that $b_p^\ast>0$ if and only if
$$
\Phi(p+q) - \frac{p}{c} < \frac{p}{\Phi(p+q)} \frac{q+\nu(0,\infty)}{c^2} .
$$

\end{document}